\documentclass[11pt]{article}

\usepackage{amsmath,amssymb,amsthm,mathtools}
\usepackage[T1]{fontenc}
\usepackage{authblk}
\usepackage{enumitem}

\usepackage{tikz}
\usetikzlibrary{positioning,arrows.meta}

\usepackage{float}
\usepackage{placeins}

\title{\bf Modular Cocycles and Haar-Type Measures on Topological Loops}

\author{\Large Takao Inou\'{e}}
\affil{\large Faculty of Informatics, Yamato University, \\ Osaka, Japan\footnote{Email: inoue.takao@yamato-u.ac.jp; \\ Personal Email: takaoapple@gmail.com \\ [I prefer my personal email address for correspondence.]}}
\date{March 12, 2026}

\newtheorem{definition}{Definition}[section]
\newtheorem{remark}[definition]{Remark}
\newtheorem{proposition}[definition]{Proposition}
\newtheorem{theorem}[definition]{Theorem}

\newtheorem{lemma}[definition]{Lemma}

\newtheorem{corollary}[definition]{Corollary}
\newtheorem{example}[definition]{Example}

\begin{document}

\maketitle

\begin{abstract}
This paper is a continuation of the author's companion work
\cite{InoueQuasi} on Haar-type measures for topological quasigroups, where the
quasigroup setting was analyzed in connection with Kunen's theorem.
We extend that framework to locally compact topological loops and study
Haar-type (quasi-invariant) Radon measures together with modular cocycles
describing the distortion of such measures under translations.
Unlike the classical group case, the composition of translations in a loop is
affected by the failure of associativity, which produces an additional
correction term governed by an associativity deviation map.
We derive the resulting cocycle relation and show that loop identities,
in particular Moufang- and Kunen-type identities, impose structural
restrictions on the modular data.
In the associative limit, the cocycle reduces to the classical modular
function of a locally compact group.
\end{abstract}

\noindent\textbf{Keywords:}
topological loops; Haar-type measures; quasi-invariant Radon measures;
modular cocycles; associativity deviation; Moufang identities; Kunen's theorem

\medskip

\noindent\textbf{MSC2020:}
20N05, 22A30, 28C10, 43A05

\tableofcontents

\section{Introduction}

The classical Haar measure plays a central role in the analysis of locally
compact topological groups.  It provides a translation-invariant measure
compatible with the algebraic structure of the group, and its modular function
quantifies the defect of right invariance when only left invariance is assumed.

When associativity is weakened or removed, however, the existence of such an
invariant measure becomes much more delicate.  In particular, for general
topological quasigroups there is no known analogue of the Haar existence
theorem.  In the companion paper \cite{InoueQuasi}, the author introduced a
Haar-type framework for topological quasigroups under explicit quasi-invariance
assumptions and showed that Kunen-type identities impose strong restrictions on
the corresponding modular behaviour.

The present paper is intended as the loop-theoretic continuation of that
quasigroup work.  Since every loop is a quasigroup with a two-sided identity,
it is natural to ask how the Haar-type picture changes when such an identity is
available from the outset.  The loop structure provides canonical left and
right translation operators, and this makes it possible to formulate the
measure-theoretic correction mechanism more explicitly.

Accordingly, throughout this paper we use the term \emph{Haar-type measure} for
a Radon measure on a locally compact topological loop that is assumed to be
quasi-invariant under the relevant translations (and, when needed, under the
associated deviation homeomorphisms).  We do not claim a general existence
theorem.  Rather, as in \cite{InoueQuasi}, we analyze the structural
consequences of assuming that such a Haar-type measure exists.

Working with Radon--Nikodym derivatives of pushforward measures leads to a
\emph{modular cocycle}
\[
\lambda(a,x)=\frac{d(L_a)_*\mu}{d\mu}(x),
\]
which depends in general both on the translating element and on the point.
In the group case the translation law is associative and the modular cocycle
collapses, in the spatially constant case, to the classical modular function.
For loops, however, the composition $L_a\circ L_b$ need not equal $L_{ab}$.
The resulting discrepancy is encoded by a deviation homeomorphism, and the
modular cocycle acquires a correction term.

The main goal of the paper is to make this mechanism explicit and then to
explain how loop identities constrain it.  This should be viewed as the loop
counterpart of the quasigroup analysis in \cite{InoueQuasi}: identities that
reduce the freedom of translation compositions also reduce the freedom of the
modular data attached to a Haar-type measure.  In particular, Moufang-type
relations and the Kunen identity impose compatibility conditions on the
deviation term, and hence on the possible measure distortion.

\begin{remark}
The results of this paper are structural rather than existential.
We do not prove the existence of Haar-type measures on arbitrary locally
compact topological loops.  Instead, assuming that such a measure exists,
we analyze the constraints forced on its Radon--Nikodym derivatives by the loop
structure and relate them to the quasigroup framework developed in
\cite{InoueQuasi}.
\end{remark}

\section{Topological Loops and Translation Operators}

\begin{definition}
A \emph{loop} is a set $L$ equipped with a binary operation
\[
L\times L\to L,\qquad (x,y)\mapsto xy,
\]
together with an element $e\in L$ such that
\[
ex=xe=x\qquad (x\in L),
\]
and such that the equations
\[
ax=b,\qquad ya=b
\]
have unique solutions for all $a,b\in L$.
\end{definition}

\begin{definition}
A \emph{topological loop} is a loop $L$ endowed with a Hausdorff topology such
that multiplication
\[
L\times L\to L,\qquad (x,y)\mapsto xy,
\]
and the left and right division operations are continuous.
\end{definition}

\begin{definition}
For each $a\in L$, define the \emph{left translation} and
\emph{right translation} by
\[
L_a(x)=ax,\qquad R_a(x)=xa.
\]
\end{definition}

\begin{proposition}
For every $a\in L$, the maps $L_a$ and $R_a$ are bijections of $L$.
If $L$ is a topological loop, they are homeomorphisms.
\end{proposition}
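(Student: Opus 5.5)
The plan is to read bijectivity directly off the unique solvability axiom in the definition of a loop, and then to obtain continuity of the inverses from continuity of the division operations.

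\emph{Bijectivity.} Fix $a\in L$ and let $b\in L$ be arbitrary. By the definition of a loop, the equation $ax=b$ has a solution $x\in L$, so $L_a$ is surjective. The solution is unique, so $L_a$ is injective: if $ax=ax'$, then both $x$ and $x'$ solve $ax=b$ with $b=ax$. The same argument applied to $ya=b$ shows that $R_a$ is a bijection.

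\emph{Explicit inverses.} I will name the inverses using the division operations. Write $a\backslash b$ for the unique solution $x$ of $ax=b$, and $b/a$ for the unique solution $y$ of $ya=b$. Then
\[
L_a^{-1}(b)=a\backslash b,\qquad R_a^{-1}(b)=b/a .
\]
These are exactly the left and right division operations that the definition of a topological loop requires to be continuous.

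\emph{Continuity.} Suppose $L$ is a topological loop. The map $x\mapsto ax$ is the composite of the continuous map $x\mapsto (a,x)$ from $L$ to $L\times L$ with the continuous multiplication map. Hence $L_a$ is continuous, and likewise $R_a$ is continuous. The inverse $b\mapsto a\backslash b$ is the composite of $b\mapsto (a,b)$ with the left division map $L\times L\to L$, which is continuous by hypothesis. Similarly $b\mapsto b/a$ is continuous. A continuous bijection with continuous inverse is a homeomorphism, so $L_a$ and $R_a$ are homeomorphisms.

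\emph{Main obstacle.} There is no substantial difficulty here. The one point needing care is that continuity of the inverses does not follow from continuity of multiplication alone. It depends on the continuity of the division operations, which is assumed in the definition of a topological loop. The plan is to identify $L_a^{-1}$ and $R_a^{-1}$ with restrictions of the division maps and cite that assumption, rather than attempting to argue through local compactness or an open-mapping type result.
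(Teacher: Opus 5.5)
Your proposal is correct and follows the same route as the paper: bijectivity comes from the unique solvability of $ax=b$ and $ya=b$, continuity of $L_a$ and $R_a$ comes from continuity of multiplication, and continuity of the inverses comes from continuity of the division operations. Your version only makes explicit the identifications $L_a^{-1}(b)=a\backslash b$ and $R_a^{-1}(b)=b/a$, which the paper's proof leaves implicit.
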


\begin{proof}
The loop axioms imply that the equations $ax=b$ and $ya=b$ have unique
solutions, so $L_a$ and $R_a$ are bijections.
In the topological setting, continuity of multiplication gives continuity of
$L_a$ and $R_a$, while continuity of the division operations gives continuity
of their inverses.
\end{proof}

The family of all left and right translations generates the natural
transformation geometry of the loop.  In the associative case one has the
exact composition law
\[
L_a\circ L_b=L_{ab},
\]
but for a general loop this identity fails, and its failure will be the source
of the modular correction term studied below.

\section{Haar-Type Measures and Modular Cocycles}

Throughout this section, let $L$ be a locally compact topological loop and let
$\mu$ be a Radon measure on $L$.

\begin{definition}
We call $\mu$ a \emph{left Haar-type measure} if it is left quasi-invariant, in
the sense that for every
$a\in L$ one has
\[
(L_a)_*\mu\ll\mu.
\]
Equivalently, for each $a\in L$ there exists a positive measurable function
\[
\lambda(a,\cdot):L\to\mathbb{R}_{>0}
\]
such that
\[
\frac{d(L_a)_*\mu}{d\mu}(x)=\lambda(a,x)
\]
for $\mu$-almost every $x\in L$.
\end{definition}

Thus, in the terminology of this paper, a Haar-type measure is a Radon measure
for which the relevant translation pushforwards are absolutely continuous with
respect to the original measure.  We call $\lambda$ the \emph{left modular
cocycle} associated with $\mu$.
If the Radon--Nikodym derivative is independent of $x$, we write
$\Delta_L(a)$ for its common value and recover a modular-function type object,
in direct analogy with the modular function in the group case.

Similarly, one may define \emph{right Haar-type measure} and a right modular
cocycle $\rho(a,x)$ by
\[
\frac{d(R_a)_*\mu}{d\mu}(x)=\rho(a,x).
\]

\begin{proposition}
The left modular cocycle satisfies the normalization condition
\[
\lambda(e,x)=1
\]
for $\mu$-almost every $x\in L$.
An analogous statement holds for $\rho$.
\end{proposition}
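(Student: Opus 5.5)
The plan is to reduce the claim to the fact that $L_e$ is the identity map of $L$, and then apply the uniqueness part of the Radon--Nikodym theorem. Since $e$ is a two-sided identity, $L_e(x)=ex=x$ for all $x\in L$, so $L_e=\mathrm{id}_L$. For any Borel set $B\subseteq L$ this gives
\[
(L_e)_*\mu(B)=\mu(L_e^{-1}(B))=\mu(B),
\]
so $(L_e)_*\mu=\mu$ as measures.

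Next I compare the two densities. By the definition of the left modular cocycle, $\lambda(e,\cdot)$ satisfies $(L_e)_*\mu(B)=\int_B\lambda(e,x)\,d\mu(x)$ for every Borel $B$. Combining this with the previous step gives
\[
\int_B \lambda(e,x)\,d\mu(x)=\mu(B)=\int_B 1\,d\mu(x)
\]
for all Borel $B$. Hence $\lambda(e,\cdot)$ and the constant function $1$ are both Radon--Nikodym derivatives of $\mu$ with respect to itself.

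The only delicate point is justifying uniqueness almost everywhere, since a Radon measure on a locally compact space need not be $\sigma$-finite. I would handle this by localizing. Let $K\subseteq L$ be compact, so that $\mu(K)<\infty$. Take $B=\{x\in K:\lambda(e,x)>1\}$; the identity above gives $\int_B(\lambda(e,x)-1)\,d\mu=0$, so $\mu(B)=0$. The same argument applied to $\{x\in K:\lambda(e,x)<1\}$ shows that set is also null. Thus $\lambda(e,\cdot)=1$ $\mu$-a.e. on every compact set. Inner regularity of $\mu$ on sets of finite measure, or alternatively the standard convention that Radon--Nikodym derivatives are determined locally $\mu$-almost everywhere, then yields $\lambda(e,x)=1$ for $\mu$-almost every $x$. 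If one prefers to avoid this, one can simply assume $L$ is $\sigma$-compact, and then the uniqueness is immediate.

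For $\rho$ the argument is identical. We have $R_e(x)=xe=x$, so $R_e=\mathrm{id}_L$, hence $(R_e)_*\mu=\mu$, and the same uniqueness step gives $\rho(e,x)=1$ almost everywhere. I expect no genuine obstacle here beyond the measure-theoretic uniqueness point just discussed.
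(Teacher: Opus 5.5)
Your argument is the paper's: $L_e=\mathrm{id}_L$ (resp.\ $R_e=\mathrm{id}_L$) gives $(L_e)_*\mu=\mu$, so the Radon--Nikodym derivative equals $1$ almost everywhere; the paper simply takes uniqueness of that derivative for granted. One caveat on your extra uniqueness step: inner regularity on sets of finite measure cannot turn ``null on every compact set'' into ``null'' when the exceptional set has infinite measure, as Haar measure on $\mathbb{R}\times\mathbb{R}$ (second factor discrete) shows, where a valid version of $d\mu/d\mu$ equals $2$ on such a set, so only your $\sigma$-compactness assumption or the locally-null convention actually settles that point.
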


\begin{proof}
Since $L_e=\mathrm{id}_L$, one has $(L_e)_*\mu=\mu$, and hence
\[
\frac{d(L_e)_*\mu}{d\mu}(x)=1
\]
for $\mu$-almost every $x$.
\end{proof}

\begin{definition}
The loop $L$ is said to be \emph{unimodular with respect to $\mu$} if both
modular cocycles are trivial:
\[
\lambda(a,x)=\rho(a,x)=1
\]
for all $a\in L$ and for $\mu$-almost every $x\in L$.
\end{definition}

\begin{remark}
Even when $\lambda(a,x)$ depends on $x$, it is natural to think of it as a
non-associative analogue of the modular function.  The pointwise dependence on
$x$ records the fact that, outside the group case, the distortion of measure is
not governed solely by the translating element.
\end{remark}

\section{Deviation of Translation Composition}

We now make precise how non-associativity enters the measure-theoretic picture.

\subsection{Deviation homeomorphisms}

For $a,b\in L$, the composition $L_a\circ L_b$ is again a homeomorphism of $L$,
but in general it is not equal to $L_{ab}$.  Since $L_{ab}$ is a homeomorphism,
we may isolate the discrepancy as follows.

\begin{definition}
For $a,b\in L$, define the \emph{associativity deviation homeomorphism}
\[
\Phi_{a,b}:L\to L
\]
by
\[
L_a\circ L_b=\Phi_{a,b}\circ L_{ab},
\]
equivalently,
\[
\Phi_{a,b}=L_a\circ L_b\circ L_{ab}^{-1}.
\]
\end{definition}

\begin{remark}
This definition avoids any appeal to an inverse element $(ab)^{-1}$ inside the
loop multiplication.  It uses only the inverse homeomorphism of the
translation map $L_{ab}$, which is always available in a loop.
If $L$ is associative, then $\Phi_{a,b}=\mathrm{id}_L$ for all $a,b$.
\end{remark}

\subsection{Measure-class correction}

Because $L_a\circ L_b=\Phi_{a,b}\circ L_{ab}$, any comparison between the
pushforward measures $(L_a\circ L_b)_*\mu$ and $(L_{ab})_*\mu$ must account for
the additional transformation $\Phi_{a,b}$.

\begin{definition}
Assume that $\mu$ is quasi-invariant under $\Phi_{a,b}$.
We write
\[
J_{\Phi}(a,b;x)
:=
\frac{d(\Phi_{a,b})_*\mu}{d\mu}(x)
\]
for the Radon--Nikodym derivative associated with the deviation homeomorphism.
\end{definition}

The next lemma is a standard chain-rule computation for Radon--Nikodym
derivatives.

\begin{lemma}\label{lem:chainrule}
Let $f,g:L\to L$ be homeomorphisms such that $\mu$ is quasi-invariant under
$g$ and $f$.  Write
\[
\alpha(x)=\frac{d f_*\mu}{d\mu}(x),\qquad
\beta(x)=\frac{d g_*\mu}{d\mu}(x).
\]
Then
\[
\frac{d (f\circ g)_*\mu}{d\mu}(x)=\alpha(x)\,\beta(f^{-1}x)
\]
for $\mu$-almost every $x$.
\end{lemma}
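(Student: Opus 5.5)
We test the claimed density against an arbitrary Borel set $B\subseteq L$ and reduce everything to the two defining Radon--Nikodym identities together with the change-of-variables formula for pushforwards. First, we note that $f\circ g$ leaves the measure class of $\mu$ invariant: if $\mu(B)=0$, then $\mu(f^{-1}B)=0$ because $f_*\mu\ll\mu$, and hence $\mu(g^{-1}f^{-1}B)=0$ because $g_*\mu\ll\mu$. So $(f\circ g)_*\mu\ll\mu$ and the left-hand side of the claimed identity is defined. Since $(f\circ g)_*\mu=f_*(g_*\mu)$, it suffices to compute $f_*(g_*\mu)(B)$.

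The key steps, in order, are as follows. First, by definition of $\beta$, we have $g_*\mu=\beta\,\mu$. Second, for any positive measurable $h$, a pushforward of a density measure satisfies
\[
f_*(h\,\mu)(B)=\int_{f^{-1}B}h\,d\mu=\int_L \mathbf 1_B(f(y))\,h(y)\,d\mu(y)=\int_L \mathbf 1_B(x)\,h(f^{-1}x)\,d(f_*\mu)(x).
\]
The last equality is the abstract change of variables $\int \varphi\circ f\,d\mu=\int\varphi\,d(f_*\mu)$, applied with $\varphi=\mathbf 1_B\cdot(h\circ f^{-1})$; it is legitimate because $f$ is a homeomorphism, so $h\circ f^{-1}$ is Borel. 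Third, we substitute $d(f_*\mu)=\alpha\,d\mu$ and take $h=\beta$. This gives
\[
(f\circ g)_*\mu(B)=\int_B \alpha(x)\,\beta(f^{-1}x)\,d\mu(x)\qquad\text{for all Borel }B.
\]
Uniqueness of the Radon--Nikodym derivative, which holds because $\mu$ is Radon and hence $\sigma$-finite on the relevant sets, then yields the identity $\mu$-a.e.

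The only point needing care is well-definedness: $\beta$ is determined only $\mu$-a.e., so we must check that $\beta\circ f^{-1}$ is determined $\mu$-a.e. If $\beta=\beta'$ off a null set $N$, then $\beta\circ f^{-1}=\beta'\circ f^{-1}$ off $f(N)$. We have $\mu(f(N))=0$ precisely when $\mu(N)=0$ under quasi-invariance of $\mu$ under $f^{-1}$. This is where I expect the main obstacle. Only $f_*\mu\ll\mu$ is assumed, so I would first try to avoid the issue by performing the computation with a fixed representative $\beta$ and observing that the integral identity above holds for that representative. Different representatives differ on a set $N$ with $(f_*\mu)(f(N))=\mu(N)=0$. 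Since $d(f_*\mu)=\alpha\,d\mu$, the product $\alpha\cdot(\beta\circ f^{-1})$ is therefore unchanged $\mu$-a.e. This settles the issue without needing the reverse absolute continuity.

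A secondary technical point is $\sigma$-finiteness for the Radon--Nikodym theorem on a locally compact space that may not be $\sigma$-compact. I would handle it by restricting to Borel sets of finite measure, or by assuming $L$ is $\sigma$-compact as is implicit in the paper's use of Radon--Nikodym derivatives.
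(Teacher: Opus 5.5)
Your proof is correct and follows the paper's route: write $g_*\mu=\beta\,\mu$, use $(f\circ g)_*\mu=f_*(g_*\mu)$, apply the change-of-variables formula for the pushforward by $f$, and substitute $d(f_*\mu)=\alpha\,d\mu$. You also spell out details that the paper's sketch leaves implicit: that $(f\circ g)_*\mu\ll\mu$, that the result does not depend on the representative of $\beta$, and the $\sigma$-finiteness caveat needed for uniqueness of the density.
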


\begin{proof}
One has $(f\circ g)_*\mu=f_*(g_*\mu)$.  Writing $g_*\mu=\beta\,\mu$ and then
pushing forward by $f$ gives
\[
(f\circ g)_*\mu=f_*(\beta\,\mu).
\]
Evaluating against test functions and applying the change-of-variables formula
for the pushforward by $f$ yields the stated identity.
\end{proof}

\begin{proposition}[Deviation-corrected cocycle relation]\label{prop:maincocycle}
Assume that $\mu$ is left quasi-invariant and that it is also quasi-invariant
under each deviation homeomorphism $\Phi_{a,b}$.
Then, for $\mu$-almost every $x\in L$,
\[
\lambda(a,x)\,\lambda\!\bigl(b,L_a^{-1}x\bigr)
=
J_{\Phi}(a,b;x)\,\lambda\!\bigl(ab,\Phi_{a,b}^{-1}x\bigr).
\]
Equivalently, after replacing $x$ by $\Phi_{a,b}(x)$,
\[
\lambda\!\bigl(a,\Phi_{a,b}(x)\bigr)\,
\lambda\!\bigl(b,L_a^{-1}\Phi_{a,b}(x)\bigr)
=
J_{\Phi}(a,b;\Phi_{a,b}(x))\,\lambda(ab,x).
\]
\end{proposition}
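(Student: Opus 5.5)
The plan is to compute the Radon--Nikodym derivative of the single measure $(L_a\circ L_b)_*\mu=(\Phi_{a,b}\circ L_{ab})_*\mu$ in two different ways, each time using Lemma~\ref{lem:chainrule}. Uniqueness of Radon--Nikodym derivatives $\mu$-almost everywhere then forces the two expressions to agree. The hypotheses supply what the lemma needs: left quasi-invariance covers $L_a$, $L_b$ and $L_{ab}$, and the additional assumption covers $\Phi_{a,b}$. All four maps are homeomorphisms, by the earlier proposition on translations and by the definition of $\Phi_{a,b}$.

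\emph{First factorization.} I take $f=L_a$ and $g=L_b$. Then $\alpha=\lambda(a,\cdot)$ and $\beta=\lambda(b,\cdot)$, and Lemma~\ref{lem:chainrule} gives
\[
\frac{d(L_a\circ L_b)_*\mu}{d\mu}(x)=\lambda(a,x)\,\lambda\bigl(b,L_a^{-1}x\bigr)
\]
for $\mu$-a.e.\ $x$.

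\emph{Second factorization.} I take $f=\Phi_{a,b}$ and $g=L_{ab}$. Then $\alpha=J_\Phi(a,b;\cdot)$ and $\beta=\lambda(ab,\cdot)$, which gives
\[
\frac{d(\Phi_{a,b}\circ L_{ab})_*\mu}{d\mu}(x)=J_\Phi(a,b;x)\,\lambda\bigl(ab,\Phi_{a,b}^{-1}x\bigr).
\]
The two maps coincide, so the two pushforward measures coincide, and the first displayed identity of the proposition follows almost everywhere.

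\emph{Substitution step.} For the equivalent form I replace $x$ by $\Phi_{a,b}(x)$. The point needing care is that an identity holding off a $\mu$-null set $N$ still holds off $\Phi_{a,b}^{-1}(N)$ after the substitution. This requires $\mu(\Phi_{a,b}^{-1}(N))=0$, which means $(\Phi_{a,b})_*\mu\ll\mu$. That is precisely the assumed quasi-invariance under $\Phi_{a,b}$.

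The main obstacle is making Lemma~\ref{lem:chainrule} itself fully correct, since its proof is only sketched and the composite function $\beta\circ f^{-1}$ must be well defined a.e. I would check it directly against a test function $\varphi$:
\[
\int\varphi\,d f_*(\beta\mu)=\int(\varphi\circ f)\,\beta\,d\mu=\int\varphi\,(\beta\circ f^{-1})\,d f_*\mu=\int\varphi\,(\beta\circ f^{-1})\,\alpha\,d\mu .
\]
The middle equality is the change of variables $y=f(x)$, applied to the function $(\varphi\cdot(\beta\circ f^{-1}))\circ f=(\varphi\circ f)\,\beta$. The modification of $\beta$ on a $\mu$-null set changes $\beta\circ f^{-1}$ only on an $f_*\mu$-null set. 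Since $f_*\mu\ll\mu$, that set is harmless, so quasi-invariance of $\mu$ under $f$ is exactly what is needed.
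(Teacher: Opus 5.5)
Your proposal is correct and matches the paper's proof: you compute the Radon--Nikodym derivative of $(L_a\circ L_b)_*\mu=(\Phi_{a,b}\circ L_{ab})_*\mu$ twice via Lemma~\ref{lem:chainrule}, once with $(f,g)=(L_a,L_b)$ and once with $(f,g)=(\Phi_{a,b},L_{ab})$, and compare the two results. Your extra checks are sound and fill in details the paper leaves implicit: the substitution $x\mapsto\Phi_{a,b}(x)$ preserves almost-everywhere statements exactly because $(\Phi_{a,b})_*\mu\ll\mu$, and you verify the chain rule directly against test functions.
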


\begin{proof}
By Lemma~\ref{lem:chainrule}, applied first to $L_a$ and $L_b$, we obtain
\[
\frac{d(L_a\circ L_b)_*\mu}{d\mu}(x)
=
\lambda(a,x)\,\lambda\!\bigl(b,L_a^{-1}x\bigr).
\]
On the other hand, using the factorization
\[
L_a\circ L_b=\Phi_{a,b}\circ L_{ab}
\]
and applying Lemma~\ref{lem:chainrule} to $\Phi_{a,b}$ and $L_{ab}$ gives
\[
\frac{d(L_a\circ L_b)_*\mu}{d\mu}(x)
=
J_{\Phi}(a,b;x)\,\lambda\!\bigl(ab,\Phi_{a,b}^{-1}x\bigr).
\]
Comparing the two expressions proves the claim.
\end{proof}

\begin{corollary}[Associative limit]
If $L$ is associative, then $\Phi_{a,b}=\mathrm{id}_L$ and
$J_{\Phi}(a,b;x)=1$.  Hence
\[
\lambda(a,x)\,\lambda\!\bigl(b,L_a^{-1}x\bigr)=\lambda(ab,x).
\]
In particular, if $\lambda(a,x)=\Delta_L(a)$ is independent of $x$, then
\[
\Delta_L(ab)=\Delta_L(a)\Delta_L(b),
\]
so the classical modular function is recovered.
\end{corollary}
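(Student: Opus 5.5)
The plan is to specialize Proposition~\ref{prop:maincocycle} to the associative case, and then specialize once more to the spatially constant case.

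\emph{Step 1: the deviation homeomorphism is trivial.} If $L$ is associative, then for every $x\in L$
\[
L_a\circ L_b(x)=a(bx)=(ab)x=L_{ab}(x).
\]
Hence $L_a\circ L_b=L_{ab}$, and the definition $\Phi_{a,b}=L_a\circ L_b\circ L_{ab}^{-1}$ gives $\Phi_{a,b}=\mathrm{id}_L$. This also settles the extra hypothesis of Proposition~\ref{prop:maincocycle}. We have $(\mathrm{id}_L)_*\mu=\mu$, so $\mu$ is automatically quasi-invariant under each $\Phi_{a,b}$. As in the normalization proposition for $\lambda(e,\cdot)$, we get $J_\Phi(a,b;x)=1$ for $\mu$-almost every $x$.

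\emph{Step 2: the cocycle identity.} Substitute $\Phi_{a,b}^{-1}x=x$ and $J_\Phi=1$ into the first identity of Proposition~\ref{prop:maincocycle}. This gives
\[
\lambda(a,x)\,\lambda\bigl(b,L_a^{-1}x\bigr)=\lambda(ab,x)
\]
for $\mu$-almost every $x$. The only point that needs care is that both sides are defined only almost everywhere, so we should only finitely many null sets are discarded.

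This is the main obstacle, mild as it is. For fixed $a,b$ the proposition yields a single conull set. We must also check that $x\mapsto\lambda(b,L_a^{-1}x)$ is well defined up to null sets. It is, because $(L_a)_*\mu\ll\mu$, so $L_a$ maps $\mu$-null sets to $\mu$-null sets. Indeed, if $\mu(N)=0$ then $\mu(L_a^{-1}N)=((L_a)_*\mu)(N)=0$, which is what makes composition with $L_a^{-1}$ respect the almost-everywhere class.

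\emph{Step 3: the modular function.} Suppose $\lambda(a,x)=\Delta_L(a)$ is independent of $x$ for every $a$. Then the identity of Step 2 reads $\Delta_L(a)\Delta_L(b)=\Delta_L(ab)$, holding on a conull set. Since $\mu$ is a nonzero Radon measure, that set is nonempty, so the equality of constants holds outright. This is the multiplicativity of the classical modular function. If $\mu$ were the zero measure the statement would be vacuous, so we tacitly assume $\mu\neq 0$.
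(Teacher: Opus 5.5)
Your main line is the same as the paper's, which states the corollary without a separate proof as a direct specialization. Associativity gives $L_a\circ L_b=L_{ab}$, hence $\Phi_{a,b}=\mathrm{id}_L$ and $J_\Phi(a,b;\cdot)=1$ a.e. Substituting into Proposition~\ref{prop:maincocycle} gives the untwisted relation, and constancy in $x$ gives multiplicativity of $\Delta_L$. Your remark that the conull set is nonempty because $\mu\neq 0$ fills in a point the paper leaves tacit.

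The auxiliary justification in Step 2, however, runs in the wrong direction. Your computation $\mu(N)=0\Rightarrow\mu(L_a^{-1}N)=((L_a)_*\mu)(N)=0$ shows that \emph{preimages} of null sets under $L_a$ are null. It does not show that $L_a$ maps null sets to null sets, and that is what well-definedness of $x\mapsto\lambda(b,L_a^{-1}x)$ needs. Changing $\lambda(b,\cdot)$ on a null set $M$ changes the composite on $L_a(M)$. Since $\mu(L_a(M))=((L_a^{-1})_*\mu)(M)$, you need $(L_a^{-1})_*\mu\ll\mu$.

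This does not follow from $(L_a)_*\mu\ll\mu$ in general. For $\mu=\chi_{[0,\infty)}\,dx$ on $\mathbb{R}$ and $f(x)=x+1$ one has $f_*\mu\ll\mu$, yet $\mu([-1,0))=0$ while $\mu(f([-1,0)))=1$. In the present setting the conclusion can be rescued in either of two ways. By associativity, $L$ is a group, so $L_a^{-1}=L_{a^{-1}}$ and left quasi-invariance at $a^{-1}$ gives exactly what is needed. Alternatively, the paper's requirement $\lambda(a,\cdot)>0$ makes $(L_a)_*\mu$ and $\mu$ mutually absolutely continuous. Strictly speaking the check is also unnecessary for the corollary, since Proposition~\ref{prop:maincocycle} already asserts the identity almost everywhere.
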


\section{Loop Identities and Constraints on the Cocycle}

The corrected cocycle relation of Proposition~\ref{prop:maincocycle} shows that
the obstruction to multiplicativity is exactly the deviation term
$J_{\Phi}(a,b;x)$.  Consequently, identities that constrain the deviation
homeomorphisms also constrain the modular cocycle.

\subsection{A general rigidity principle}

\begin{proposition}\label{prop:rigidity}
Let $\mathcal{I}$ be a family of loop identities whose translation-theoretic
content implies that for certain pairs $(a,b)$ one has
\[
\Phi_{a,b}=\mathrm{id}_L.
\]
Then, for those pairs, the modular cocycle satisfies the untwisted relation
\[
\lambda(a,x)\,\lambda\!\bigl(b,L_a^{-1}x\bigr)=\lambda(ab,x)
\]
for $\mu$-almost every $x$.
\end{proposition}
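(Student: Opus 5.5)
The plan is to specialize Proposition~\ref{prop:maincocycle} to the pairs $(a,b)$ for which the identities in $\mathcal{I}$ force $\Phi_{a,b}=\mathrm{id}_L$. The argument has three steps.

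First, check the hypotheses of Proposition~\ref{prop:maincocycle}. Left quasi-invariance of $\mu$ is assumed throughout. For the pairs in question, quasi-invariance under $\Phi_{a,b}$ holds automatically, since $(\mathrm{id}_L)_*\mu=\mu$. The deviation-corrected relation is therefore available for exactly these pairs, with no additional assumption on the other deviation maps.

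Second, compute the correction term. From $(\Phi_{a,b})_*\mu=\mu$ we get
\[
J_{\Phi}(a,b;x)=\frac{d\mu}{d\mu}(x)=1
\]
for $\mu$-almost every $x$, exactly as in the normalization proof for $\lambda(e,x)$. Also $\Phi_{a,b}^{-1}x=x$ for every $x$.

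Third, substitute these into the first form of Proposition~\ref{prop:maincocycle}. The identity holds off a $\mu$-null set, and $J_\Phi=1$ holds off another $\mu$-null set. On the complement of their union we obtain
\[
\lambda(a,x)\,\lambda\!\bigl(b,L_a^{-1}x\bigr)=\lambda(ab,x).
\]
This is the untwisted relation, mirroring the associative-limit corollary pair by pair.

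The only delicate point is the almost-everywhere bookkeeping. Radon--Nikodym derivatives are defined only up to $\mu$-null sets, so $\lambda(b,L_a^{-1}x)$ must be a well-defined class. This requires that $L_a^{-1}$ map $\mu$-null sets to $\mu$-null sets, which is what quasi-invariance provides; it is already implicit in Lemma~\ref{lem:chainrule}. I would record this remark and otherwise treat the proposition as a direct corollary.

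If one prefers to bypass Proposition~\ref{prop:maincocycle}, there is an equally short route. Note that $L_a\circ L_b=L_{ab}$ for these pairs, and apply Lemma~\ref{lem:chainrule} once to $f=L_a$, $g=L_b$. This gives the claim directly.
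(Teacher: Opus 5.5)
Your argument is correct and is exactly the paper's proof: with $\Phi_{a,b}=\mathrm{id}_L$ one has $J_{\Phi}(a,b;\cdot)=1$ and $\Phi_{a,b}^{-1}=\mathrm{id}_L$, so Proposition~\ref{prop:maincocycle} collapses to the untwisted relation (your direct route through Lemma~\ref{lem:chainrule} with $L_a\circ L_b=L_{ab}$ is the same computation). One small correction to your null-set remark: for $x\mapsto\lambda(b,L_a^{-1}x)$ to be well defined you need $\mu(L_aN)=0$ for every $\mu$-null $N$, i.e.\ $\mu\ll(L_a)_*\mu$, which follows from the strict positivity of $\lambda(a,\cdot)$ rather than from $(L_a)_*\mu\ll\mu$ alone.
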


\begin{proof}
If $\Phi_{a,b}=\mathrm{id}_L$, then $J_{\Phi}(a,b;x)=1$ and
$\Phi_{a,b}^{-1}=\mathrm{id}_L$.  The claim follows immediately from
Proposition~\ref{prop:maincocycle}.
\end{proof}

This simple observation is the structural core of the paper:
\emph{whenever a loop identity collapses translation deviation, it forces the
modular cocycle toward multiplicativity.}

\subsection{Moufang-type constraints}

Among the most important non-associative identities are the Moufang
identities.  One standard form is
\[
((xy)z)y=x(y(zy)).
\]
Such identities do not make a general loop associative, but they strongly
control the interaction of left and right translations.

\begin{remark}
In a Moufang loop, many composite translations admit alternative
factorizations that are impossible in a general loop.  Consequently, the
corresponding deviation homeomorphisms cannot vary freely.
At the measure-theoretic level, this forces compatibility relations between
the left modular cocycle, the right modular cocycle, and the deviation term.
\end{remark}

Rather than claiming a universal closed formula without additional hypotheses,
we record the precise structural consequence needed for the present paper.

\begin{theorem}[Moufang-type compatibility]
Let $L$ be a locally compact topological loop equipped with a Haar-type measure
$\mu$, that is, a Radon measure quasi-invariant under the
relevant deviation homeomorphisms.

Assume that a Moufang identity yields two factorizations of the same
translation operator:
\[
T_1=T_2.
\]
Then the Radon--Nikodym derivatives computed from $T_1$ and $T_2$ coincide,
and therefore the corresponding products of left and right modular cocycles
agree almost everywhere.
\end{theorem}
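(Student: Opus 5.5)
The argument is essentially the uniqueness of Radon--Nikodym derivatives, combined with the chain rule of Lemma~\ref{lem:chainrule}. Write the Moufang identity $((xy)z)y=x(y(zy))$ as an operator identity in the variable $x$, with $y,z$ fixed:
\[
R_y\circ R_z\circ R_y = L\text{-free form } R_{y(zy)} ,
\]
that is, $T_1:=R_y\circ R_z\circ R_y$ and $T_2:=R_{y(zy)}$. More generally, $T_1$ and $T_2$ will be finite composites of left translations, right translations and deviation homeomorphisms $\Phi_{a,b}$. The hypothesis that $\mu$ is quasi-invariant under all the relevant maps guarantees that every factor has a Radon--Nikodym derivative: $\lambda$, $\rho$ or $J_\Phi$.

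\textbf{Step 1.} First I show that $(T_1)_*\mu\ll\mu$ and compute its derivative. Quasi-invariance is stable under composition: if $f_*\mu\ll\mu$ and $g_*\mu\ll\mu$, then $(f\circ g)_*\mu=f_*(g_*\mu)\ll f_*\mu\ll\mu$, since pushforward by a homeomorphism preserves absolute continuity. Applying Lemma~\ref{lem:chainrule} inductively to $T_1=S_1\circ\cdots\circ S_k$ gives
\[
\frac{d(T_1)_*\mu}{d\mu}(x)=\prod_{j=1}^{k}\delta_{S_j}\bigl((S_1\circ\cdots\circ S_{j-1})^{-1}x\bigr),
\]
where each $\delta_{S_j}$ is the appropriate $\lambda(a,\cdot)$, $\rho(a,\cdot)$ or $J_\Phi(a,b;\cdot)$. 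For the Moufang example this reads
\[
\rho(y,x)\,\rho(z,R_y^{-1}x)\,\rho\bigl(y,R_z^{-1}R_y^{-1}x\bigr).
\]

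\textbf{Step 2.} I do the same for $T_2$; in the example the derivative is simply $\rho(y(zy),x)$.

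\textbf{Step 3.} Since $T_1=T_2$ as maps, $(T_1)_*\mu=(T_2)_*\mu$ as measures. The Radon--Nikodym derivative with respect to the $\sigma$-finite Radon measure $\mu$ is unique $\mu$-a.e., so the two products agree almost everywhere. This gives, for example,
\[
\rho(y,x)\,\rho(z,R_y^{-1}x)\,\rho\bigl(y,R_z^{-1}R_y^{-1}x\bigr)=\rho\bigl(y(zy),x\bigr)\quad\mu\text{-a.e.}
\]
Analogous mixed $\lambda$/$\rho$ relations follow from the other Moufang forms, such as $(xy)(zx)=(x(yz))x$ read as $L_{xy}$- and $R$-composites.

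\textbf{Main obstacle.} The delicate point is the almost-everywhere bookkeeping in the inductive chain rule. Each factor is evaluated at a point moved by a homeomorphism, so I must verify that null sets pull back to null sets. This holds because quasi-invariance in both directions means $\mu\circ S^{-1}$ and $\mu$ have the same null sets. The standing assumption only gives $S_*\mu\ll\mu$, so I would either assume equivalence of measure classes, or apply the hypothesis to $S^{-1}$ as well (which is again a translation or deviation map, for instance $L_a^{-1}$ via left division). With this in hand, the compositions of a.e.-defined functions are well defined and the comparison in Step 3 is legitimate.
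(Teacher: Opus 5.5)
Your proposal is correct and is essentially the paper's own argument: factor both sides into translations (and deviation maps), apply Lemma~\ref{lem:chainrule} repeatedly, and use uniqueness of the Radon--Nikodym derivative of the single measure $(T_1)_*\mu=(T_2)_*\mu$; your instance $R_y\circ R_z\circ R_y=R_{y(zy)}$ and the inductive product formula make explicit what the paper leaves schematic. The null-set issue you flag is already settled by the positivity of the densities in the paper's definition (so each $S_*\mu$ has exactly the null sets of $\mu$), but note that a Radon measure need not be $\sigma$-finite, so without an extra hypothesis such as $\sigma$-compactness of $L$ the uniqueness in Step~3 holds only locally almost everywhere.
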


\begin{proof}
Write both sides as compositions of left and right translations together with
deviation homeomorphisms.  Applying Lemma~\ref{lem:chainrule} repeatedly to
each factorization gives two expressions for the Radon--Nikodym derivative of
the same pushforward measure $T_{1*}\mu=T_{2*}\mu$.  Equality of these
derivatives yields the desired compatibility relation.
\end{proof}

\begin{remark}
The theorem is intentionally formulated at the structural level.  To obtain an
explicit closed identity for the cocycle, one must specify the exact
translation factorization attached to the chosen Moufang law and track the
resulting deviation terms.  The point relevant here is that the Moufang law
forces such identities to exist.
\end{remark}

\subsection{The Kunen identity}

We now turn to the identity emphasized in the companion quasigroup paper
\cite{InoueQuasi}:
\[
((xy)z)y=x(y(zy)).
\]
In the literature this is often referred to as the Kunen identity; it is
Moufang-type in flavour and, in the quasigroup setting of \cite{InoueQuasi},
it was used to derive strong structural restrictions and, ultimately, to link
the modular behaviour to the emergence of an identity element.

The present paper revisits that same identity in the loop setting, where a
two-sided identity is already part of the structure.  The issue is therefore no
longer the creation of an identity element, but rather the effect of the Kunen
relation on the modular data of a Haar-type measure.  As in the quasigroup
paper, the key point is translation-theoretic: the two sides determine the same
transformation of the underlying space and therefore force equality of the
corresponding Radon--Nikodym derivatives.

\begin{theorem}[Kunen-type constraint]
Let $L$ be a locally compact topological loop equipped with a Haar-type measure
$\mu$, assumed to be quasi-invariant under the translations and deviation
homeomorphisms appearing in the Kunen identity.
Then the identity
\[
((xy)z)y=x(y(zy))
\]
induces a compatibility relation among the relevant left and right modular
cocycles and deviation terms.  In particular, whenever the Kunen identity
collapses a deviation homeomorphism to the identity, the corresponding modular
correction term disappears.
\end{theorem}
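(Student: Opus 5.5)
Fix $y,z\in L$ and read the Kunen identity as an equality of two homeomorphisms in the variable $x$. The left side $x\mapsto((xy)z)y$ is $T_1:=R_y\circ R_z\circ R_y$. The right side $x\mapsto x(y(zy))$ is $T_2:=R_{y(zy)}$. So the identity says $T_1=T_2$ as self-maps of $L$. Both are homeomorphisms, by the proposition on translations in Section~2. Hence the pushforwards agree, $(T_1)_*\mu=(T_2)_*\mu$, and so do their Radon--Nikodym derivatives with respect to $\mu$.

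I will compute each derivative by Lemma~\ref{lem:chainrule}, using quasi-invariance of $\mu$ under right translations, which is among the translations assumed in the statement. For $T_2$ the derivative is just $\rho(y(zy),x)$. For $T_1$, applying the lemma twice gives
\[
\frac{d(T_1)_*\mu}{d\mu}(x)=\rho(y,x)\,\rho\bigl(z,R_y^{-1}x\bigr)\,\rho\bigl(y,R_z^{-1}R_y^{-1}x\bigr).
\]
Equating the two expressions yields, for $\mu$-a.e.\ $x$,
\[
\rho(y,x)\,\rho\bigl(z,R_y^{-1}x\bigr)\,\rho\bigl(y,R_z^{-1}R_y^{-1}x\bigr)=\rho\bigl(y(zy),x\bigr).
\]
This is the compatibility relation among the right cocycles.

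To bring in the left cocycles and the deviation terms, I would also freeze other variables. Fixing $x$ and $z$, the map in $y$ is not a composition of translations, so that direction gives nothing. Fixing $x$ and $y$ gives the map $z\mapsto((xy)z)y=R_y L_{xy}(z)$ on the left side. On the right side it gives $z\mapsto x(y(zy))=L_x L_y R_y(z)$. Rewrite $L_xL_y=\Phi_{x,y}\circ L_{xy}$, which is the defining factorization of $\Phi_{x,y}$. Then apply Lemma~\ref{lem:chainrule} to $R_yL_{xy}$ and to $\Phi_{x,y}L_{xy}R_y$, and equate the results. This is the relation involving $\lambda$, $\rho$ and $J_\Phi(x,y;\cdot)$, whose existence the theorem asserts. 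Writing it out only needs quasi-invariance under $\Phi_{x,y}$, which is assumed.

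The final clause follows from the same computation. Whenever the identity forces $\Phi_{x,y}=\mathrm{id}_L$, we get $J_\Phi=1$ and $\Phi_{x,y}^{-1}=\mathrm{id}$. The correction term then drops out, exactly as in Proposition~\ref{prop:rigidity}.

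I expect the main obstacle to be the almost-everywhere bookkeeping. When the lemma is applied repeatedly, the inner derivatives are evaluated at $f^{-1}x$. One must check that $\mu$-null sets are preserved by the maps involved, so that "a.e." survives composition. This holds because quasi-invariance is assumed for each factor and their inverses. Since the statement assumes only $(T)_*\mu\ll\mu$, I would first check whether mutual absolute continuity is needed. If it is, I would add it by applying the same hypothesis to $T^{-1}$, whose translation factors are $R_y^{-1}$ and the like, which are again homeomorphisms.
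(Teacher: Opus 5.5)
Your proposal is correct and takes the same route as the paper. Both read the two sides of the Kunen identity as the same composite translation, apply Lemma~\ref{lem:chainrule} to each factorization, equate the resulting Radon--Nikodym derivatives, and get the final clause from Proposition~\ref{prop:rigidity}. Your version is more explicit than the paper's sketch: you name the concrete factorizations $R_y\circ R_z\circ R_y=R_{y(zy)}$ (in the variable $x$) and $R_y\circ L_{xy}=\Phi_{x,y}\circ L_{xy}\circ R_y$ (in the variable $z$), and you write out the cocycle relations they produce.
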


\begin{proof}
The two sides of the Kunen identity define the same composite translation
operator.  By applying Lemma~\ref{lem:chainrule} to both corresponding
factorizations, one obtains two expressions for the Radon--Nikodym derivative
of the same pushforward measure.  Equality of these expressions yields the
claimed compatibility relation.  The final statement is then an instance of
Proposition~\ref{prop:rigidity}.
\end{proof}

\subsection{Interpretation}

The picture may be summarized as follows.

\begin{itemize}[leftmargin=2em]
\item The modular cocycle measures distortion of $\mu$ under translation.
\item The deviation homeomorphism $\Phi_{a,b}$ measures failure of
associativity at the level of translation composition.
\item The correction term $J_{\Phi}(a,b;x)$ is exactly the obstruction to
multiplicativity of the cocycle.
\item Loop identities constrain $\Phi_{a,b}$, and therefore constrain the
possible correction term.
\end{itemize}

This is the structural bridge between algebraic loop identities and
measure-theoretic rigidity.

\section{A Basic Example}

We include a simple example showing that the framework is non-vacuous and
that the deviation-corrected cocycle relation is compatible with familiar
measure-theoretic behaviour.

\begin{example}[Finite discrete loop with counting measure]
Let $L$ be a finite loop endowed with the discrete topology, and let $\mu$ be
counting measure on $L$.
Then every singleton is open and compact, so $\mu$ is a Radon measure.
Moreover, every left translation $L_a:x\mapsto ax$ and every right
translation $R_a:x\mapsto xa$ is a bijection of the finite set $L$.
Hence for every subset $E\subseteq L$ one has
\[
\mu(L_a^{-1}E)=\#(L_a^{-1}E)=\#E=\mu(E),
\]
and similarly for right translations.  Therefore counting measure is a genuine
Haar-type measure in the strongest possible sense: it is actually invariant
under all left and right translations.  Thus
\[
(L_a)_*\mu=\mu,\qquad (R_a)_*\mu=\mu,
\]
so the Radon--Nikodym derivatives are identically equal to $1$:
\[
\lambda(a,x)=1,
\qquad
\rho(a,x)=1
\]
for all $a,x\in L$.

The loop may still be non-associative, so the deviation homeomorphisms
$\Phi_{a,b}$ need not be trivial.  Nevertheless, since each $\Phi_{a,b}$ is a
bijection of a finite set, its pushforward also preserves counting measure.
Thus the associated Jacobian term satisfies
\[
J_{\Phi}(a,b;x)=1.
\]
Consequently the corrected cocycle relation of
Proposition~\ref{prop:maincocycle} reduces to the tautology
\[
1\cdot 1 = 1\cdot 1.
\]
\end{example}

This example is deliberately elementary, but it is useful for two reasons.
First, it shows that the present formalism applies to genuinely
non-associative loops, not only to groups.  Second, it isolates the source of
nontriviality: in the finite discrete case the measure-theoretic distortion is
invisible, so all interesting behaviour must come from infinite or more
subtle locally compact loops where the Radon--Nikodym derivatives and
possibly the deviation Jacobians are no longer trivial.

\section{Conclusion and Future Directions}

We have investigated Haar-type measures on locally compact topological loops
and introduced a modular cocycle describing the distortion of such measures
under translations.

The present paper should be read as a continuation of the author's earlier
quasigroup study \cite{InoueQuasi}.  In that work, Haar-type phenomena for
quasigroups were analyzed under explicit quasi-invariance assumptions and were
connected with Kunen-type structural restrictions.  Here we have shown that, in
the loop setting, the presence of a two-sided identity allows the same general
philosophy to be formulated more explicitly at the level of translation
operators and their deviation from associativity.

The central conceptual point is that the failure of associativity in a loop
manifests itself as a deviation in the composition of translation operators.
This deviation is encoded by the homeomorphisms $\Phi_{a,b}$ and produces the
correction term $J_{\Phi}(a,b;x)$ in the cocycle relation
\[
\lambda(a,x)\,\lambda\!\bigl(b,L_a^{-1}x\bigr)
=
J_{\Phi}(a,b;x)\,\lambda\!\bigl(ab,\Phi_{a,b}^{-1}x\bigr).
\]
Thus the classical multiplicativity of the modular function is replaced, in the
loop setting, by a deviation-corrected cocycle law for Haar-type measures.

From this perspective, loop identities act as rigidity mechanisms.
Whenever an identity restricts or trivializes the deviation homeomorphism, it
forces the modular cocycle closer to the associative behaviour familiar from
locally compact groups.  Moufang- and Kunen-type identities therefore become
natural algebraic sources of measure-theoretic constraints.

In the associative limit the deviation disappears entirely, the correction term
is trivial, and the usual modular function of a locally compact group is
recovered.  In this sense, the present framework may be viewed as a
non-associative extension of the classical Haar-modular picture.  The finite
discrete example above shows that the framework is already meaningful on
genuinely non-associative loops, even though the cocycle becomes trivial in
that case.

\medskip

Several directions for further work suggest themselves.

\begin{enumerate}[label=\arabic*.]
\item \textbf{Existence of Haar-type measures.}
The main open problem is to determine for which locally compact topological
loops Haar-type Radon measures exist at all, and under what additional
conditions a genuine Haar-type existence theorem might hold.

\item \textbf{Comparison with the quasigroup framework.}
It would be valuable to isolate precisely which parts of the quasigroup theory
from \cite{InoueQuasi} descend formally to loops and which new simplifications
become available once a two-sided identity is present.

\item \textbf{Explicit formulas in special classes.}
For Moufang, Bol, Bruck, or other structured classes of loops, one may hope to
compute the deviation homeomorphisms more explicitly and thereby obtain closed
cocycle formulas.

\item \textbf{Collapse phenomena and unimodularity.}
It is natural to ask when the deviation correction vanishes identically, so
that the loop becomes unimodular with respect to the given measure.

\item \textbf{Toward non-associative harmonic analysis.}
If suitable invariant or quasi-invariant measures exist, one may attempt to
develop convolution-type constructions and representation-theoretic tools for
topological loops.
\end{enumerate}

The framework developed here suggests that the interaction between loop
identities and Haar-type measure structures is a promising route toward a
broader non-associative harmonic analysis.

\bigskip

\noindent Takao Inou\'{e}

\noindent Faculty of Informatics

\noindent Yamato University

\noindent Katayama-cho 2-5-1, Suita, Osaka, 564-0082, Japan

\noindent inoue.takao@yamato-u.ac.jp
 
\noindent (Personal) takaoapple@gmail.com (I prefer my personal mail)
\bigskip

For the convenience of the reader, we include in Appendix~A a schematic diagram
summarizing the conceptual flow of the present paper and its relation to the
quasigroup framework of \cite{InoueQuasi}.

\appendix
\section{Structural Overview of the Modular Cocycle Framework}

The following diagram summarizes the conceptual structure developed in this paper.
It should also be viewed as a loop-theoretic continuation of the quasigroup
framework introduced in \cite{InoueQuasi}.

\begin{figure}[H]
\centering
\begin{tikzpicture}[
  node distance=1.8cm and 2.6cm,
  every node/.style={align=center},
  box/.style={
    draw,
    rounded corners,
    thick,
    rectangle,
    minimum width=3.8cm,
    minimum height=1.0cm,
    inner sep=6pt
  },
  smallbox/.style={
    draw,
    rounded corners,
    thick,
    rectangle,
    minimum width=3.4cm,
    minimum height=0.9cm,
    inner sep=5pt
  },
  arrow/.style={->, thick}
]

\node[box] (loop) {Locally compact\\ topological loop};

\node[box, below=of loop] (haar) {Haar-type measure\\ (quasi-invariant Radon measure)};

\node[box, below=of haar] (rn) {Radon--Nikodym data\\ under translations};

\node[box, below=of rn] (cocycle) {Modular cocycle};

\node[smallbox, right=5.4cm of rn] (dev) {Associativity deviation\\ $\Phi_{a,b}$};

\node[smallbox, below=1.6cm of dev] (corr) {Correction term in\\ cocycle relation};

\node[smallbox, below=of cocycle] (identities) {Moufang- and Kunen-type\\ identities};

\node[box, below=of identities] (restrictions) {Structural restrictions on\\ modular data};

\node[box, below=of restrictions] (limit) {Associative limit\\ classical modular function};

\draw[arrow] (loop) -- (haar);
\draw[arrow] (haar) -- (rn);
\draw[arrow] (rn) -- (cocycle);

\draw[arrow] (loop.east) -- ++(1.6,0) |- (dev.west);
\draw[arrow] (dev) -- (corr);
\draw[arrow] (corr.west) -- (cocycle.east);

\draw[arrow] (cocycle) -- (identities);
\draw[arrow] (identities) -- (restrictions);
\draw[arrow] (restrictions) -- (limit);

\end{tikzpicture}
\caption{Conceptual structure of the modular cocycle framework for Haar-type
measures on topological loops. The failure of associativity produces an
associativity deviation, which contributes a correction term to the cocycle
relation. Loop identities such as Moufang- and Kunen-type identities then impose
structural restrictions on the resulting modular data.}
\label{fig:structural_overview}
\end{figure}
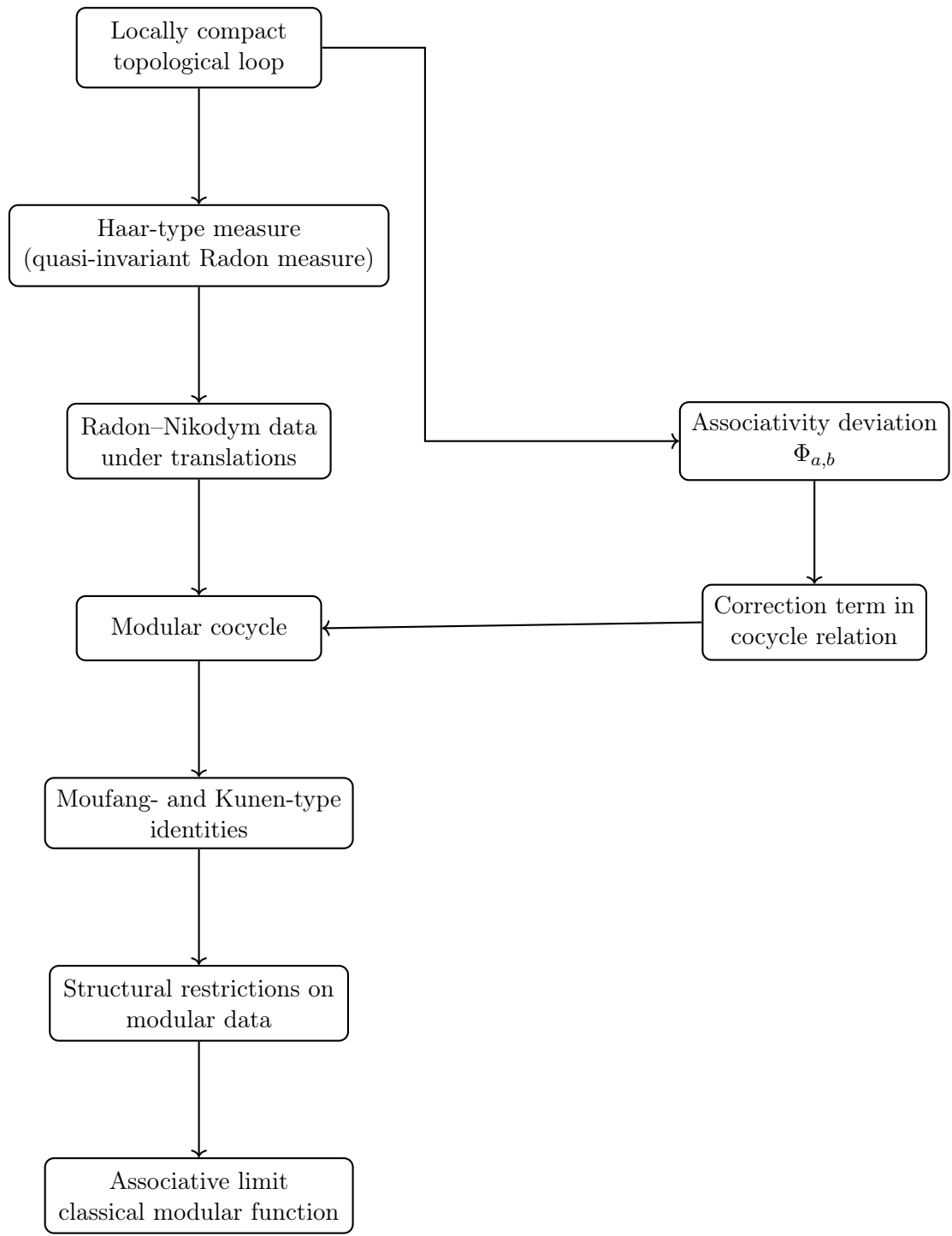

\appendix

\section{Structural Diagram and Relation to the Quasigroup Framework}

The purpose of this appendix is purely expository: it provides a schematic
comparison between the quasigroup theory developed in \cite{InoueQuasi}
and the loop-theoretic modular cocycle framework established in the present paper.

\begin{figure}[H]
\centering
\begin{tikzpicture}[
  scale=0.42,
  transform shape,
  node distance=1.65cm and 2.35cm,
  every node/.style={align=center, font=\LARGE},
  mainbox/.style={
    draw,
    rounded corners,
    thick,
    rectangle,
    minimum width=7.0cm,
    minimum height=2.05cm,
    inner sep=11pt
  },
  smallbox/.style={
    draw,
    rounded corners,
    thick,
    rectangle,
    minimum width=5.8cm,
    minimum height=1.70cm,
    inner sep=9pt
  },
  connector/.style={->, thick},
  dashedconnector/.style={->, thick, dashed}
]

\node[mainbox] (q1) {Topological quasigroups};
\node[mainbox, right=4.0cm of q1] (q2) {Haar-type measures};
\node[mainbox, right=4.0cm of q2] (q3) {Kunen's theorem and\\ quasigroup structure};

\node[smallbox, below=2.1cm of q2] (bridge) {Continuation / extension\\ to the loop setting};

\node[mainbox, below=2.45cm of bridge, xshift=-10.6cm] (l1) {Locally compact\\ topological loops};
\node[mainbox, right=3.6cm of l1] (l2) {Haar-type measure\\ (quasi-invariant Radon measure)};
\node[mainbox, right=3.6cm of l2] (l3) {Radon--Nikodym data\\ and modular cocycle};

\node[smallbox, below=1.9cm of l2] (a1) {Associativity deviation\\ $\Phi_{a,b}$};
\node[smallbox, right=3.6cm of a1] (a2) {Correction term in\\ cocycle relation};

\node[mainbox, below=2.35cm of a1, xshift=-11.6cm] (l4) {Moufang- and Kunen-type\\ identities};
\node[mainbox, right=3.6cm of l4] (l5) {Structural restrictions on\\ modular data};
\node[mainbox, right=3.6cm of l5] (l6) {Associative limit:\\ classical modular function};

\draw[connector] (q1) -- (q2);
\draw[connector] (q2) -- (q3);

\draw[dashedconnector] (q2) -- (bridge);
\draw[connector] (bridge.south west) -- ++(-0.65,-0.95) -- (l1.north);
\draw[connector] (bridge.south) -- (l2.north);
\draw[connector] (bridge.south east) -- ++(0.65,-0.95) -- (l3.north);

\draw[connector] (l1) -- (l2);
\draw[connector] (l2) -- (l3);

\draw[connector] (l2) -- (a1);
\draw[connector] (a1) -- (a2);
\draw[connector] (a2.north) -- ++(0,0.85) -| (l3.south);

\draw[connector] (a1) -- (l4);
\draw[connector] (l4) -- (l5);
\draw[connector] (l5) -- (l6);

\end{tikzpicture}
\caption{Comparison between the quasigroup framework of \cite{InoueQuasi}
and the present loop-theoretic extension. The upper row summarizes the
quasigroup setting, where Haar-type measures were studied in relation to
Kunen's theorem. The lower part summarizes the present paper: Haar-type
measures on locally compact topological loops give rise to Radon--Nikodym
data and modular cocycles, with an additional correction term produced by
associativity deviation. Moufang- and Kunen-type identities then impose
structural restrictions on the modular data, and in the associative limit
the theory recovers the classical modular function.}
\label{fig:quasigroup_loop_overview}
\end{figure}
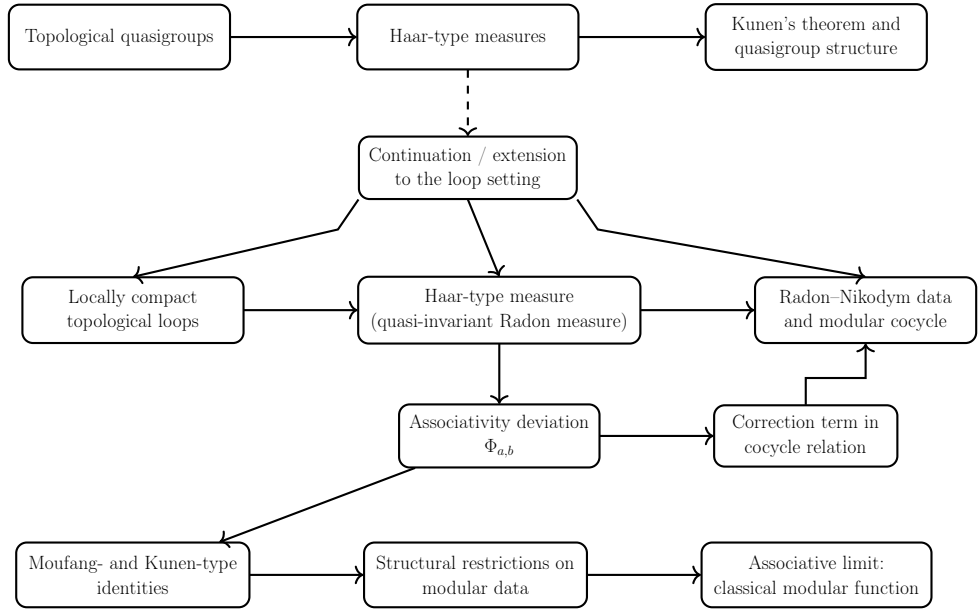

\end{document}